\newcommand{\df}{\displaystyle\frac}
\newcommand{\CP}{{\mathbb {CP}}}
\newcommand{\HP}{{\mathbb {HP}}}
\newcommand{\OP}{{\mathbb {OP}}}
\newcommand{\Q}{{\mathbb Q}}
\newcommand{\Z}{{\mathbb Z}}
\newtheorem{theorem}{Theorem}[section]
\newtheorem{lemma}[theorem]{Lemma}
\theoremstyle{definition}
\newtheorem{remark}[theorem]{Remark}
\newtheorem{example}[theorem]{Example}
\title{Rational Analogs of Projective Planes}
\author{Zhixu Su}
\address{Department of Mathematics\\
University of California, Irvine\\
California, 92697\\
USA}
\email{zhixus@uci.edu}
\begin{document}

\maketitle

\begin{abstract}
In this paper, we study the existence of high-dimensional, closed, smooth manifolds whose rational homotopy type resembles that of a projective plane. Applying rational surgery, the problem can be reduced to finding possible Pontryagin numbers satisfying the Hirzebruch signature formula and a set of congruence relations, which turns out to be equivalent to finding solutions to a system of Diophantine equations.
\end{abstract}

\maketitle

\section{Introduction}

There are four kinds of projective planes, the well-known real, complex, quaternionic and octonionic projective planes. There does not exist any higher dimensional closed manifold having the topological structure of a projective plane. More precisely, for $n>8$, there does not exist any simply-connected $2n$-dimensional closed manifold $M$ such that
$$H^*(M;\Z)=\left\{
                            \begin{array}{ll}
                              \Z  & \ast =0, n, 2n; \\
                               0  & \hbox{ otherwise}.
                            \end{array}
                          \right.$$ 
This fact is a consequence of the well-known Hopf Invariant One Theorem.
If there were such a manifold $M^{2n}$ for $n>8$, then there would have to exist a Morse function with minimal number of critical points, giving a CW complex $X=e^0 \cup e^{n} \cup_\phi e^{2n}$ that is homotopy equivalent to $M$. This would require the existence of a Hopf invariant 1 attaching map $\phi: S^{2n-1}\rightarrow S^{n}$. But the only such maps are homotopic to the Hopf fibrations $S^{2k-1} \rightarrow S^k$ for $k=1,2,4,8$. 

Ignoring torsion, we ask if any rational analogs of projective planes exist in higher dimension. This paper proves the following result.
 
\begin{theorem}\label{main} After dimension 4,8, and 16, which are the dimensions of $\CP^2$, $\HP^2$ and $\OP^2$ respectively, the next smallest dimension where a rational analog of projective plane exists is 32, i.e, there exist 32-dimensional, simply-connected, closed, smooth manifolds $M$ such that
\begin{equation*}H^*(M;\Q)=\left\{
                            \begin{array}{ll}
                              \Q  & \ast =0, 16, 32; \\
                               0  & \hbox{ otherwise},
                            \end{array}
                          \right.
\end{equation*}
and there are infinitely many homeomorphism types of such manifolds.
\end{theorem}

From the desired intersection form, it is immediate that such a manifold exists only in dimension $4k$. We will first show that for $k\neq1$, there is no such manifold in dimension $4k$ where $k$ is odd. Then, as we study the candidate dimensions, 24 also turns out to give a negative answer. In dimension 32, we can find infinitely many homeomorphism types of rational projective planes in terms of their Pontryagin numbers.

The main tool to prove the results is the rational surgery realization theorem, which was first introduced by Barge in \cite[Theorem 1]{bg} and by Sullivan in \cite{s1}; equivalent statements can be found in Taylor-Williams \cite{tw}. The theorem gives a constructive answer to the existence question by finding pairings of $4i$-dimensional cohomology classes and a choice of fundamental class that act like Pontryagin numbers. In section 2, we state the rational surgery realization theorem, which will be phrased in a form that is ready for application to our problem. To make the theorem more accessible, a variant of the proof will be given. In section 3, we will prove Theorem \ref{main}.

{\bf Acknowledgments}

I would like to thank my advisor James Davis for many helpful conversations, comments and encouragement. I thank Diarmuid Crowley for pointing out a certain useful reference to me, and John Rickert for verifying some computations using number theory. I also thank the referee for providing helpful comments.

\section{Rational surgery}

Given a rational homotopy type, a natural question is whether there exists a closed manifold realizing the rational homotopy data. Compared to its integral version, the existence question in rational setting has a more explicit solution. Philosophically, this is due to the much simpler rational homotopy groups of spheres. Initiated by Barge \cite{bg} and Sullivan \cite{s1}, rational surgery constructs closed manifolds, that are rational homotopy equivalent to a proposed $\Q$-local space $X^n$, which is a CW complex whose homotopy groups are $\Q$-vector spaces. To get any positive answer, it is clearly necessary to start with a local space $X$ that satisfies Poincar\'{e} duality in rational coefficients. The ingredients for constructing a realizing manifold include choices of cohomology classes in $H^{4i}(X; \Q)$, which play the role of Pontryagin classes, and correspondingly, a suitable choice of fundamental class in $H_n(X;\Q)\cong H_n(X;\Z)\cong\Q$.

\begin{theorem}[Barge \cite{bg}, Sullivan \cite{s1}]\label{bs} Let $X$ be an $n=4k$-dimensional simply-connected, $\mathbb{Q}$-local,
$\mathbb{Q}$-Poincar\'{e} complex, where $k\neq1$. There exists a simply-connected $4k$-dimensional, closed, smooth manifold $M$, and a
$\mathbb{Q}$-homotopy equivalence $f:
M\to X$ 
if and only if there exist cohomology classes $p_i \in H^{4i}(X;\mathbb{Q})$  for $i=1, \ldots, k$, and a fundamental class $\mu \in
H_{4k}(X;\mathbb{Q})\cong\Q$ such that  	

\textup{(i)} the pairing of the $k$th $L$-polynomial of $p_i$'s and $\mu$ is equal to the signature of $X$, i.e., $\langle L_k(p_1,\ldots, p_k), \mu\rangle=\sigma(X)$;

\textup{(ii)} the intersection form $\lambda: H^{2k}(X;\Q)\times
H^{2k}(X;\Q)\rightarrow \Q$ defined as
$\langle\cdot\cup\cdot,\mu\rangle$ is isomorphic to a direct sum of copies of $\langle1\rangle$'s and $\langle-1\rangle$'s; and

\textup{(iii)} the pairings $\langle p_I, \mu\rangle=\langle p_{i_1}\cdots p_{i_r}, \mu\rangle$ over all the partitions $I=(i_1,\ldots,i_r)$ of $k$ form a set of Pontryagin numbers of a genuine closed smooth manifold, i.e., there exists a $4k$-dimensional closed smooth manifold $N$
such that $$\langle p_I(\tau_N),[N]\rangle=\langle p_I, \mu\rangle$$ for all partitions $I$ of $k$.

If the choice of $p_i$'s and $\mu$ satisfies all the conditions above, surgery theory will construct a $\Q$-homotopy equivalence $f: M\to X$ such that $f_*[M]=\mu$ and $f^*(p_i)=p_i(\tau_M)$, where $p_i(\tau_M)$ is the $i$-th Pontryagin class of the tangent bundle of $M$. As a consequence, the Pontryagin numbers $p_I[M]=\langle p_I, \mu\rangle$ for all partitions $I$ of $k$.

\end{theorem}

\begin{remark}
For the dimensions $n\not \equiv 0 \pmod{4}$, the answer to the existence question in \ref{bs} is always yes. Any choice of cohomology classes $p_i$'s will construct a rational nonzero degree normal map $f:M\to X$ such that $f^*(p_i)=p_i(\tau_M)$. Since $L_n(\Q)=0$ in such dimensions, the surgery obstruction always vanishes, and therefore a rational homotopy equivalence can be obtained.
\end{remark}

\begin{proof}  We will claim that condition (iii) guarantees a degree 1 normal map from a candidate manifold $M$ to $X$ so that the fundamental class of $M$ is sent to the chosen class $\mu$. Conditions (i) and (ii) ensure the vanishing surgery obstruction.

Consider any choice of cohomology classes 
$$p: X \xrightarrow{(p_1,\ldots,p_k)} \Pi K(\mathbb{Q},4i)\simeq BSO_{(0)}.$$  For $m>>n$, let $\gamma^m$ denote the universal plane bundle  over $BSO(m)$, define the map 
$$\overline{p(\gamma^m)}:BSO(m)\xrightarrow{(\overline{p_1(\gamma^m)},\ldots,\overline{p_i(\gamma^m)},\ldots,)}
\prod K(\Q,4i),$$
 where the total class $\overline{p(\gamma^m)}=1+\overline{p_1(\gamma^m)}+\cdots+\overline{p_i(\gamma^m)}+\cdots\in \prod H^{4i}(BSO(m);\Q)$ is the unique class such that $p(\gamma^m)\overline{p(\gamma^m)}=1$.
Let $PB$ be the homotopy pull-back space of $p$ and
$\overline{p(\gamma^m)}$, and $\xi^m$ the pullback bundle
of $\gamma^m$ over $PB$. We have constructed the two right-hand columns of the following diagram. Note that $\overline{p(\gamma^m)}$ and the projection map $pr_1$ are localization maps by construction.
\[\xymatrix{
\nu_{M} \ar[d] \ar[r] & \xi \ar[d] \ar[r] & \gamma^m \ar[d] \\
M \ar[rd]_f \ar[r]^g & PB \ar[d]^{pr_1} \ar[r]^>>>>>>{pr_2} & BSO(m)
\ar[d]^{\overline{p(\gamma^m)}} \\
 & X \ar[r]^>>>>>>{p} & \Pi K(\mathbb{Q},4i) }\]
For any homotopy class $\alpha \in \pi_{n+m}(T\xi^m)$, the corresponding  map $g: S^{m+n}\longrightarrow T\xi^m$ yields a candidate manifold $M=\alpha^{-1}(PB)$ by Thom-Pontryagin construction. Moreover, $g|_{M}: M\rightarrow PB$ is covered by
a bundle map from the normal bundle of $M$ to $\xi$. Chasing through the diagram, one can check that the input classes $p_i$ are pulled back to the Pontryagin classes $p_i(\tau_{M})$ through the composition map $f:=pr_1\circ g: M\rightarrow X$. 

To construct a degree 1 normal map so that $f_*[M]=\mu $, we need a particular class $\alpha\in\pi_{n+m}(T\xi^m)$ that maps to $\mu$ under the composition of the Hurewicz map, the Thom isomorphism, and the projection ${pr_1}_*: H_n(PB;\Z)\rightarrow H_n(X;\Z)$, which is shown the following diagram.
\begin{small}
\[
\xymatrix@R=9mm @C=3.5mm{
\alpha\in\pi_{n+m}(T\xi^m) \ar[ddd]^{{Tpr_1}_*}  \ar[dr] \ar[rrr]^{{T_{pr_2}}_*} & & &  \pi_{n+m}(T\gamma^m)\ni \beta \ar[dl] \ar[ddd]^{T\overline{p(\gamma^m)}_*}\\
& H_n(PB) \ar[d]^{{pr_1}_*} \ar[r]^>>>>{{pr_2}_*}  & H_n(BSO(m)) \ar[d]^{\overline{p(\gamma^m)}_*} \\
& \mu\in H_n(X) \ar[r]^>>>{p_*}  &  H_n(BSO(m)_{(0)}) \\
c_X\in\pi_{n+m}(T\widetilde{\nu}_X) \ar[ur] ^{\cong}\ar[rrr]^{{T_p}_*}  & & &
\pi_{n+m}(T\gamma^m_{(0)})  \ar[ul]_{\cong} }\]
\end{small}

In the lower right-hand corner of the diagram, $T\gamma_{(0)}^m$ is the Thom space associated to the rational spherical fibration $S_{(0)}^{m-1}\rightarrow S\gamma_{(0)}^m\rightarrow BSO(m)_{(0)}$, which is the localization of the sphere bundle $S^{m-1}\rightarrow S\gamma^m\rightarrow BSO(m)$.  The Hurewicz-Thom map
$$\pi_{n+m}(T\gamma^m_{(0)})\rightarrow H_{n+m}(T\gamma^m_{(0)}; \Z)\rightarrow  H_n(BSO(m)_{(0)}; \Z)$$
is an isomorphism since both the Thom space and the base space are $\Q$-local, and the Hurewicz map is a rational isomorphism for $m>>n$  (Milnor-Stasheff \cite[Theorem 18.3]{ms}). In the lower left corner, the rational spherical fibration $\widetilde{\nu}_X=p^*(S\gamma_{(0)}^m)$ and the associated Thom space $T\widetilde{\nu}_X$ are $\Q$-local, and the Hurewicz-Thom map $\pi_{n+m}(T\widetilde{\nu}_X)\rightarrow H_n(X;\Z)$ is also an isomorphism. Thus, for any fundamental class $\mu$, there is a class $c_X\in\pi_{n+m}(T\widetilde{\nu}_X)$ mapping to $\mu$. Moreover, it can be shown that the outer square of Thom spaces is a homotopy cartesian square (see Taylor-Williams \cite[Lemma 6.1]{tw} or \cite[Lemma 3.2.3]{su} for more details). All these observations together imply that if there exists a class $\beta\in\pi_{n+m}(T\gamma^m)$ in the upper right corner mapping to $p_*\mu\in H_n(BSO(m)_{(0)})$, then $\beta$ and $c_X$ would guarantee the existence of a desired class $\alpha$ that maps to $\mu$. 

Note that the Hurewicz-Thom map in the upper right-hand corner can be viewed as $\nu: \pi_{n+m}(T\gamma^m)\cong \Omega_n^{SO}\rightarrow H_n(BSO;\mathbb{Q})$, where $\nu(M)={\nu_M}_*[M]$, and $\nu_M$ is the classifying map of the normal bundle of a manifold $M$. Thus there is a $\beta$ mapping to $p_*\mu$ if and only if ${\overline{p(\gamma^m)}}_*^{-1}(p_*\mu)$ lies in the image of such map $\nu$.

If the input classes $\{p_i\}$ and $\mu$ together satisfy condition (iii), i.e., there exists a closed smooth manifold $N$ such that $\langle p_I(\tau_N),[N]\rangle=\langle p_I,\mu\rangle$, chasing through the diagram, we have 
$$\langle p_I(\tau_N),[N]\rangle=\langle p_I,\mu\rangle=\langle p_I(\gamma^m),p_*\mu\rangle=\langle\overline{p(\gamma^m)}_I,{\overline{p(\gamma^m)}}_*^{-1}(p_*\mu)\rangle.$$
Since $p(\tau_N)p(\nu_N)=1$, and $\overline{p(\gamma^m)}p(\gamma^m)=1$, the identity above implies that $\langle p_I(\nu_N),[N]\rangle=\langle
p_I(\gamma^m),{\overline{p(\gamma^m)}}_*^{-1}(p_*\mu)\rangle$, which is equivalent to saying that ${\overline{p(\gamma^m)}}_*^{-1}(p_*\mu)$ is the image of a manifold $N$ under the
homomorphism $\nu: \Omega_n^{SO}\rightarrow H_n(BSO;\mathbb{Q})$. This implies that $\pi_{n+m}(T\gamma^m)$ possesses the desired class $\beta$ and thus ensures the existence of $\alpha$, which finishes the proof that condition (iii) guarantees that there exists a degree 1 normal map such that $f_*[M]=\mu$.

Now surgery can be applied to alter the normal map to a rational homotopy equivalence if and only if the map has a vanishing surgery obstruction, which lives in the $L$ group 
$$L_n(\Q)\cong \Z\oplus \bigoplus_{\infty}\Z_2 \oplus \bigoplus_{\infty}\Z_4$$
(see Milnor-Husmoller \cite{mh}). The obstruction vanishes in its $\Z$-summand if and only if the signature $\sigma(M)=\sigma(X)$, which is equivalent to condition (i), since
\begin{eqnarray*}\langle L_k(p_1,\ldots, p_k),\mu\rangle&=&\langle L_k(p_1,\ldots, p_k), f_*[M]\rangle\\&=&\langle L_k(f^*p_1,\ldots,
f^*p_k),[M]\rangle\\&=&\langle
L_k(p_1(\tau_M),\ldots,p_k(\tau_M)),[M]\rangle\\&=&\sigma(M).
\end{eqnarray*} 
Condition (ii) requires the rational intersection form of $X$ to be a direct sum of $\langle1\rangle$'s and $\langle-1\rangle$'s, which guarantees the obstruction vanishes in the $\Z_2$ and $\Z_4$ summands of $L_{n}(\Q)$. This finishes the outline of the proof of Theorem \ref{bs}.
\end{proof}

\begin{remark}
One can also ask about the existence of a closed topological or piecewise-linear manifold realizing the rational homotopy type of projective planes. The realization theorem \ref{bs} still works for the $PL$ or $TOP$ category by changing the word ``smooth'' in condition (iii) to PL or topological.  \\
\end{remark}

\section{Rational projective planes}

In this section, we study the dimensions of rational projective planes. Recall that we are seeking the smallest dimension $4k$  $(>16)$ for which a simply-connected, closed, smooth manifold $M$ exists with 
$$H^*(M;\Q)=\left\{
                            \begin{array}{ll}
                              \Q  & \ast =0, 2k, 4k; \\
                               0  & \hbox{ otherwise}.
                            \end{array}
                          \right.$$ 
Equivalently, we determine the dimensions of simply-connected closed smooth manifolds that are rational homotopy equivalent to a $4k$-dimensional $\Q$-local, $\mathbb{Q}$-Poincar\'{e} complex $X$ where
$$ H^*(X;\Q)\cong\left\{
                            \begin{array}{ll}
                              \Q  & \ast =0, 2k, 4k; \\
                               0  & \hbox{ otherwise}.
                            \end{array}
                          \right.$$

\subsection{The target $\Q$-local space} First we construct $X$ from a Postnikov tower of rational principal fibrations. Let $X\rightarrow K(\Q, 2k)$ be the principal fibration with fiber $K(\Q, 6k-1)$ and $k$-invariant $\iota_{2k}^3$:
\begin{small}
\[\xymatrix{
            & K(\Q,6k-1)\ar[d] \ar[r] & K(\Q,6k-1) \ar[d] \\
            & X        \ar[d] \ar[r] & \ast \ar[d]\\
            & K(\Q,2k) \ar[r]^>>>>>>{\iota_{2k}^3}& K(\Q,6k). }\]
\end{small}Computing the spectral sequence, it is easy to check that $X$ has the desired rational cohomology ring $H^*(X;\Q)\cong \Q[x]/\langle x^3\rangle$ with $|x|=2k$. Notice that the signature $\sigma(X)=\pm1$ by our construction.

\subsection{Existence of rational projective planes}
\numberwithin{equation}{subsection}
Since $H^*(X;\Q)\cong \Q[x]/\langle x^3\rangle$, the input classes $p_i\in H^{4i}(X;\Q)$ are zero for all $i$ except $p_{\frac{k}{2}}$ and $p_k$. Plugging the constructed local space $X$ into  realization theorem \ref{bs}, the existence question of rational projective planes can then be answered as follows:

\begin{theorem} \label{3.1}For $k>4$, let $X$ be a $4k$-dimensional simply-connected $\Q$-local, $\mathbb{Q}$-Poincar\'{e} complex such that $H^*(X;\Q)\cong \Q[x]/\langle x^3\rangle$. There exists a simply-connected $4k$-dimensional, closed, smooth manifold $M$ with a $\Q$-homotopy equivalence $f:M\to X$ if and only if there exists a choice of cohomology classes $$p_{\frac{k}{2}} \in H^{2k}(X;\mathbb{Q}),\ \ \mbox{and}\ \  p_{k} \in H^{4k}(X;\mathbb{Q})
$$
together with a nonzero fundamental class $\mu \in H_{4k}(X;\Z)\cong
\mathbb{Q}$ such that 

\textup{(i)} $\langle L_{k}(0,\cdots,0,p_{\frac{k}{2}},0,\cdots,0,p_{k}), \mu\rangle=\pm1$;

\textup{(ii)} the intersection form on $H^{2k}(X;\mathbb{Q})$ with respect to
$\mu$ is isomorphic to $\langle1\rangle$ or $\langle-1\rangle$; and

\textup{(iii)} there exists a $4k$-dimensional, closed, smooth manifold $N$ such that $$\langle p_I(\tau_N),[N]\rangle=\langle p_I, \mu\rangle$$ 
for all partitions $I$ of $k$. \\
\end{theorem}

\subsection{Signature formula}
\numberwithin{equation}{subsection}
In Theorem \ref{3.1}, the signature condition (i) says:
$$s_{\frac{k}{2}, \frac{k}{2}} \langle p_{\frac{k}{2}}^2, \mu\rangle+s_k \langle p_k, \mu\rangle=\pm 1,$$where $s_{\frac{k}{2}, \frac{k}{2}}$ denotes the coefficient of $p_{\frac{k}{2}}^2$, and $s_k$ denotes the coefficient of $p_k$ in the $k$th $L$-polynomial.

$s_k$ can be calculated by the formula\begin{equation}\label{3.21}s_k=\displaystyle\frac{2^{2k}(2^{2k-1}-1)|B_{2k}|}{(2k)!},\end{equation} (Milnor-Stasheff \cite{ms}, Problem19-B), where $B_{2k}$ is the $2k$-th Bernoulli number\footnote{Here $B_{2k}$ denotes the even Bernoulli sequence $B_2=\frac{1}{6}, B_4=-\frac{1}{30}, B_6=\frac{1}{42}\ldots$.}. 

As mentioned in Anderson \cite[Lemma 1.5]{an}, $s_{\frac{k}{2}, \frac{k}{2}}$ can be calculated as
\begin{equation}\label{3.22}\begin{split}s_{\frac{k}{2}, \frac{k}{2}}&=\frac{1}{2}(s_{\frac{k}{2}}^2-s_{k})\\&=\frac{1}{2}\left(\left(\df{2^{k}\ (2^{k-1}-1)\ |B_{k}|}{k!}\right)^2-\df{2^{2k}\ (2^{2k-1}-1)\ |B_{2k}|}{(2k)!}\right).
\end{split}\end{equation}

From condition (i) and (iii), we can narrow down the candidate dimensions to $4k$ with $k$ even.
\begin{lemma}\label{3.2} For $k\neq 1$, there does not exist any rational projective plane in dimension $4k$ when $k$ is odd.
\end{lemma}

\begin{proof}
When $k$ is odd, the input Pontryagin class $p_i$ is nonzero only when $i=k$. Then condition (i) requires:
$$\langle L_{k}(0,\cdots,0,p_{k}),\mu\rangle=s_k\langle p_k,\mu\rangle=\pm1.$$
On the other hand, condition (iii) requires $\langle p_k,\mu\rangle$ to be a Pontryagin number of a closed smooth manifold, which must be an integer. Let $\frac{numer(s_k)}{denom(s_k)}$ denote the irreducible form of $s_k$, then $s_k\langle p_k,\mu\rangle=\frac{numer(s_k)}{denom(s_k)}\langle p_k,\mu\rangle=\pm1$ requires that the numerator $numer(s_k)=1$. But we will show that for $k\neq 1$,  $numer(s_k)>1$. We write
 $$s_k=\frac{2^{2k}(2^{2k-1}-1)|B_{2k}|}{(2k)!}=\frac{2^{2k}(2^{2k-1}-1)\,|numer(B_{2k})|}{(2k)!\,|denom(B_{2k})|},$$where $\frac{numer(B_{2k})}{denom(B_{2k})}$ is the irreducible form of $B_{2k}$. It is a fact that $denom(B_{2k})$ is given by the product of all primes $p$ for which $p-1$ divides $2k$. Also, these denominators are square-free and divisible by 6 by Milnor-Stasheff \cite[Page 284]{ms}. Therefore the $2$-adic order $\nu_2(denom(B_{2k}))=1$, i.e, $numer(B_{2k})$ is odd.  Since $k$ is odd and $k\neq1$, the base-2 expansion $2k=\sum_ {i=1}^{m}2^{n_i}$ has $m>1$. Thus the $2$-adic order $\nu_2((2k)!)=2k-m<2k-1$, and so 
 $$\nu_2(s_k)=\nu_2(2^{2k})-\nu_2((2k)!)-\nu_2(denom(B_{2k}))=2k-(2k-m)-1>0,$$ which implies that $numer(s_k)$ is divisible by 2, hence is greater than 1.\\
\end{proof}

\subsection{Dimension 24}
\numberwithin{equation}{subsection}
  Lemma \ref{3.2} indicates that $n=24$ is the next candidate.  It turns out that the signature formula can never be satisfied in this dimension.

\begin{lemma}\label{4} There does not exist any rational projective plane in dimension 24. 
\end{lemma}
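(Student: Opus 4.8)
Since $24=4\cdot 6$ with $k=6$ even, the divisibility argument of Lemma~\ref{3} no longer applies, so a finer analysis of conditions (i)--(iii) of Theorem~\ref{2} is needed. I would begin by fixing the target $X$ with $H^*(X;\Q)\cong\Q[x]/\langle x^3\rangle$, $|x|=12$, and noting that among the input Pontryagin classes only $p_3\in H^{12}(X;\Q)$ and $p_6\in H^{24}(X;\Q)$ can be nonzero, the intermediate cohomology groups being trivial. Writing $p_3=\lambda x$, $p_6=\nu x^2$, and normalizing $\mu$ so that $\langle x^2,\mu\rangle=\varepsilon\in\{1,-1\}$ as permitted by (ii), everything is then controlled by the two rationals
\[
a:=\langle p_3^2,\mu\rangle=\varepsilon\lambda^2,\qquad b:=\langle p_6,\mu\rangle=\varepsilon\nu;
\]
in particular (ii) already forces $\varepsilon a$ to be a perfect square.

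Next I would translate the three conditions into arithmetic. Condition (i) becomes the single linear relation $s_{3,3}\,a+s_6\,b=\pm 1$, and I would evaluate its coefficients from \eqref{3.21} and \eqref{3.22} using $B_6=\tfrac{1}{42}$ and $B_{12}=-\tfrac{691}{2730}$, recording the exact prime factorizations of the reduced numerators and denominators (the reduced numerator of $s_6$ carries the primes $23,89,691$ inherited from $2^{11}-1$ and $B_{12}$, while the denominators are divisible by $3,5,7,11,13$); clearing denominators turns (i) into an integral equation $Aa-Bb=\pm D$ with explicit $A,B,D$. Condition (iii) is the decisive one: because $p_I=0$ in $H^*(X;\Q)$ for every partition $I$ of $6$ other than $(6)$ and $(3,3)$, it says exactly that there is a closed smooth oriented $24$-manifold $N$ with $p_3^2[N]=a$, $p_6[N]=b$, and all other Pontryagin numbers zero; equivalently, the class $a\,z_{(3,3)}+b\,z_{(6)}\in H_{24}(BSO;\Q)$, where $z_I$ is dual to the $I$-th Pontryagin-number functional, must lie in the image of $\Omega^{SO}_{24}\to H_{24}(BSO;\Q)$. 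Since $\Omega^{SO}_*\otimes\Z[1/2]$ is polynomial on the classes $[\CP^{2i}]$, at each odd prime this membership is equivalent to an explicit congruence on $(a,b)$, obtained by inverting the (rational, invertible) $11\times 11$ matrix of Pontryagin numbers of the $11$ monomials $\prod[\CP^{2i_j}]$ indexed by the partitions of $6$. These are the ``congruence relations'' of the abstract, and (iii) is equivalent to their conjunction; the $2$-primary part of $\Omega^{SO}_*$ (whose torsion is harmless for Pontryagin numbers) need be analyzed only if no obstruction appears at an odd prime.

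The proof then reduces to showing that the resulting Diophantine system --- the integral signature equation $Aa-Bb=\pm D$, the congruences from (iii), and the constraint that $\varepsilon a$ be a perfect square --- is inconsistent. The natural way to package this is: the signature $s_{3,3}a+s_6 b$ of any closed smooth $24$-manifold whose Pontryagin numbers are supported on the partitions $(3,3)$ and $(6)$ must be divisible by some integer $d_0>1$, hence can never equal $\pm1$, with $d_0$ read off from the matrix inversion above. It is worth emphasizing that, dropping the realizability constraints of (iii), the bare equation $s_{3,3}a+s_6 b=\pm1$ \emph{does} admit integer solutions (the relevant gcd of numerators is $1$), so the congruences coming from (iii) are genuinely what kills dimension $24$; this is in sharp contrast with the much softer obstruction of Lemma~\ref{3}.

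The hard part is precisely this final, purely number-theoretic step: assembling the realizability congruences from $\Omega^{SO}_{24}$, combining them with the signature equation whose coefficients involve the Bernoulli numerators of $B_6,B_{12}$ together with the denominators $42$, $2730$, and $12!$, and verifying that $\pm1$ --- rather than merely some larger signature value --- is excluded. This is an intricate but finite computation, involving the eleven partitions of $6$, the characteristic numbers of products of the $\CP^{2i}$, and a gcd/congruence check modulo an appropriate prime; it is exactly the kind of verification acknowledged to J.~Rickert. In dimension $32$ the analogous system will instead be shown to be solvable, which is the source of the positive half of Theorem~\ref{1}.
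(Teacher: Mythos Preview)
Your setup is correct, but you misdiagnose where the obstruction lies and thereby propose a far harder computation than is needed. The paper's proof of this lemma uses only condition~(i), condition~(ii), and the bare \emph{integrality} part of~(iii); it never invokes the Hattori--Stong congruences or any $11\times 11$ Pontryagin-number matrix. With $\langle p_3^2,\mu\rangle=\pm x^2$ and $\langle p_6,\mu\rangle=\pm y$ for integers $x,y$, condition~(i) reads
\[
s_{3,3}\,x^2+s_6\,y=\pm1,\qquad s_{3,3}=\frac{40247}{638512875},\quad s_6=-\frac{2828954}{638512875},
\]
i.e.\ $40247\,x^2-2828954\,y=\pm 638512875$, and one checks (by quadratic reciprocity modulo the prime factors $23$, $89$, $691$ of $2828954=2(2^{11}-1)\cdot 691$, or by software) that this has no integer solution. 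The decisive ingredient is precisely the perfect-square constraint on $a=\langle p_3^2,\mu\rangle$ forced by condition~(ii), which you record but then effectively set aside: your assertion that ``the congruences coming from~(iii) are genuinely what kills dimension~$24$'' is false, and your observation that the \emph{linear} equation in $(a,b)$ has integer solutions is beside the point once $a$ is constrained to be $\pm$ a square.

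Your proposed route---inverting the full realizability lattice in $\Omega^{SO}_{24}$ and exhibiting a universal divisor $d_0>1$ of the signature of any manifold with Pontryagin numbers supported on $(3,3)$ and $(6)$---would at best reprove the lemma at much greater cost, and you have not verified that such a $d_0$ even exists without also invoking the square constraint. The Hattori--Stong congruences enter the paper only in dimension~$32$, where the analogous quadratic Diophantine equation \emph{does} admit integer solutions and one must then check that some of them also satisfy the full realizability conditions; in dimension~$24$ the story ends one step earlier.
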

\begin{proof}
Condition (i) requires existence of cohomology classes $p_3\in
H^{12}(X; \Q)\cong\Q$, $p_6\in H^{24}(X; \Q)\cong\Q$ and a choice of 
fundamental class $\mu\in H_{24}(X;\Z)\cong\Q$ such that 
$$s_{3,3} \langle p_{3}^2, \mu\rangle+s_6 \langle p_6, \mu\rangle=\pm 1.$$

 Let $\alpha$ be any nonzero class in $H^{12}(X,\Q)\cong \Q$. One can write 
$$p_3=a\alpha, \ \ p_3^2=a^2\alpha^2$$
 and $$p_6=b\alpha^2$$
  for some nonzero rational number $a$ and $b$. Correspondingly, let $[X]\in H_{24}(X,\Z)\cong\Q$ be the fundamental class such that $\langle\alpha \cup
\alpha,[X]\rangle=1$. 

In order to have a rational intersection form isomorphic to a direct sum of $\langle1\rangle$'s and $\langle-1\rangle$'s, we need to choose a fundamental class 
$\mu$ such that $\mu=\pm r^2[X]$ for some nonzero rational number $r$. 

Condition (iii) requires the pairings $\langle p_{3}^2, \mu\rangle$ and $\langle p_6, \mu\rangle$ to be integers. We may let $x$ and $y$ be the integers such that $x^2=a^2r^2$, $y=b\ r^2$, and so 
$$\langle p_3^2,\mu\rangle=\pm x^2, \ \langle p_6,\mu \rangle=\pm y$$ 

Altogether, condition
(i),(ii) and the integrality part of condition (iii) require the existence of integers $x$ and $y$
such that
\begin{equation}\label{3.31}s_{3,3} x^2+s_6 y=\pm1,\end{equation}
where the coefficients can be computed using formulas \eqref{3.21} and \eqref{3.22} to be 
$$s_{3,3}=-\frac{40247}{638512875}, \ \ s_6=\frac{2828954}{638512875}$$

The Diophantine equation \eqref{3.31} is equivalent to the quadratic residue problem of finding an integer $x$ such that 
\begin{align}\label{3.32}
-40247x^2 &\equiv\pm638512875\pmod{2828954}\\
x^2 &\equiv\pm(-40247)^{-1}\cdot638512875\pmod{2828954} \nonumber\\
x^2 &\equiv\pm(-296623)\cdot638512875\pmod{2828954}\nonumber\\
x^2 &\equiv\pm118951\pmod{2828954}\nonumber
\end{align}

Consider the prime factorization $2828954=2\cdot 23\cdot 89\cdot 691$ and the following two cases.

$\bullet$ {\it Case} (i). $x^2 \equiv118951\pmod{2828954}
$. The Jacobi symbol with modulus the prime factor 691 can be calculated as: 
\begin{small} $$\left(\dfrac{118951}{691}\right)=\left(\dfrac{99}{691}\right)=-\left(\dfrac{691}{99}\right)=-\left(\dfrac{97}{99}\right)=-\left(\dfrac{99}{97}\right)=-\left(\dfrac{2}{97}\right)=-1,$$\end{small} 
which implies that $\left(\dfrac{118951}{2828954}\right)=-1$.\\

$\bullet$ {\it Case} (ii). $x^2 \equiv-118951\pmod{2828954}
$. The Jacobi symbol with modulus the prime factor 23 can be calculated as: 
\begin{small}$$\left(\dfrac{-118951}{23}\right)=\left(\dfrac{-1}{23}\right)\left(\dfrac{118951}{23}\right)=(-1)\left(\dfrac{18}{23}\right)=(-1)\left(\dfrac{2}{23}\right)\left(\dfrac{3^2}{23}\right)=(-1)(1)(1)=-1,$$\end{small}
which implies that $\left(\dfrac{-118951}{2828954}\right)=-1$.

Therefore the congruence \eqref{3.32} has no solution. Hence equation \eqref{3.31} turns out to have no solution.

\end{proof}

In order to continue the analysis on the higher candidate dimensions,  we need to give condition (iii) of the rational surgery realization theorem \ref{bs} an explicit interpretation.

\subsection{Congruence relations among Pontryagin numbers}  Condition (iii) requires the set of pairings $\langle p_I, \mu\rangle$ to be Pontryagin numbers of a genuine closed smooth manifold. These integers form a sublattice in $\Z^{p(n)}$ which can be classified by a set of congruence relations. The following Hattori-Stong Theorem says that the Riemann-Roch Theorem and the integrality of Pontryagin numbers completely determine all the relations among the Pontryagin numbers of closed smooth manifolds. 

Prior to restating the Hattori-Stong Theorem, we provide the definition of the $KO$-theoretic Pontryagin character $e_i(\gamma)$ of the universal bundle $\gamma$ over $BSO$.  The total Pontryagin class of the universal vector bundle $\gamma$ can be formally expressed as $p(\gamma)=\Pi(1+x_j^2)$ by the splitting principle. 
The class $e_i(\gamma)\in H^*(BSO;\Q)$ is the $i$-th elementary symmetric polynomial of the variables $e^{x_j}+e^{-x_j}-2$, i.e.,
$$e_i(\gamma)=\sigma_i(e^{x_1}+e^{-x_1}-2, \ e^{x_2}+e^{-x_2}-2, \ \cdots).$$

Note that each class $e_i(\gamma)$ can be written as a polynomial in the Pontryagin classes $p_i(\gamma)$'s. This is because $e_i$ can be expanded as a symmetric polynomials of the variables $x_j^2$'s, but any symmetric polynomial can be expressed in terms of the elementary symmetric polynomials in the variables, which in our case are exactly the Pontryagin classes $p_i(\gamma)$'s, since the total class $p(\gamma)=\Pi(1+x_j^2)$.

\begin{theorem} [{Smooth Hattori-Stong Theorem. {Stong \cite[Page 207, Theorem (c)]{st2}}, {Madsen-Milgram  \cite [Theorem 11.19, 11.20, 11.21]{mm}}}] \label{hs} For closed smooth manifolds, the stable tangent bundle $\tau_N:N\to BSO$ induces a homomorphism
$$\tau: \Omega^{SO}_*/\mathrm{tor}\rightarrow H_*(BSO;\Q).$$
The image of the homomorphism $\tau$ is a lattice consisting of exactly the elements
$x\in H_*(BSO;\Q)$ such that
\begin{equation}\label{3.4.1}\left\{ \begin{array}{ll}
    \langle \Z[e_1(\gamma), e_2(\gamma),\cdots ]\cdot L(p_i(\gamma)), \ x\rangle\in\Z[\frac{1}{2}],\\
    \\
    \langle \Z[p_1(\gamma),p_2(\gamma),\cdots], \ x\rangle\in\Z,
\end{array}
                       \right.
\end{equation}
where $L(p_i(\gamma))$ is the total $L$-polynomial of the Pontryagin classes $p_i(\gamma)$'s.
\end{theorem}

Applying the smooth Hattori-Stong Theorem in our problem, we get the following results.
 
\begin{lemma} Condition (iii) in the rational surgery realization theorem \ref{bs} is equivalent to the following statement.

Given a local space $X$, there exist cohomology classes $p_i \in H^{4i}(X;\mathbb{Q})$ and a fundamental class $\mu \in
H_{4k}(X;\mathbb{Q})\cong
\mathbb{Q}$ such that
\begin{equation}\label{3.4.2}\left\{ \begin{array}{ll}
    \langle \Z[e_1, e_2,\cdots ]\cdot L(p_i), \ \mu\rangle\in\Z[\frac{1}{2}],\\
    \\
    \langle \Z[p_1,p_2,\cdots], \ \mu\rangle\in\Z,
\end{array}\right.
\end{equation}
where each class $e_i\in H^*(X;\Q)$ can be expressed as a polynomial of $p_i$'s in the same way that $e_i(\gamma)$ is expressed in terms of  $p_i(\gamma)$'s in the Hatorri-Stong Theorem \ref{hs}. Here $L(p_i)$ denotes the total $L$-polynomial of the classes $p_i$'s.

\end{lemma}

\begin{proof}
In Theorem \ref{hs}, since each $e_i(\gamma)$ can be written as a polynomial in the Pontryagin classes $p_i(\gamma)$, both lines of the congruence relations in \eqref{3.4.1} are equivalent to a set of integrality conditions on the Pontryagin numbers $\langle p_I(\gamma), x\rangle$.

For any $4k$-dimensional closed smooth manifold $N\in \Omega^{SO}_{4k}$, let $x=\tau_*[N]$. Since $$\langle p_I(\tau_N),[N]\rangle=\langle p_I(\gamma),\tau_*[N]\rangle=\langle p_I(\gamma), x\rangle,$$the relations on $\langle p_I(\gamma), x\rangle$ in \eqref{3.4.1} simultaneously determine a set of integrality conditions on the Pontryagin numbers $\langle p_I(\tau_N),[N]\rangle$. Therefore \eqref{3.4.1} characterizes all the possible Pontryagin numbers of a closed smooth manifold.

Condition (iii) in the main theorem \ref{bs} requires that the numbers $\langle p_I, \mu\rangle$ equal the Pontryagin numbers $\langle p_I(\tau_N),[N]\rangle$ for a certain genuine $4k$-dim closed smooth manifold $N$. Hence the numbers $\langle p_I, \mu\rangle$ must satisfy the same set of congruence relations that the Pontryagin numbers of a closed smooth manifold should satisfy. These relations are then expressed as \eqref{3.4.2}. 
\end{proof}

In our case, since all the Pontryagin classes are zero except in dimension $2k$ and $4k$, we may express the $e_i$ classes solely in terms of $p_{\frac{k}{2}}$ and $p_k$. 

The following example in dimension 16 illustrates how such expressions can be calculated explicitly in high dimensions.

\begin{example} Suppose we want to find the explicit congruence relations in dimension 16. The first thing we need to do is to express the 16-dimensional summand of $\Z[e_1, e_2, \ldots ]\cdot L$ in terms of the Pontryagin classes $p_i$'s. Since $e_i$ consists of classes of dimension no less than $4i$, the 16-dimensional classes live in 
\begin{equation}\label{3.4.3}\left(\Z\oplus\Z e_1\oplus\Z e_1^2\oplus\Z e_2\oplus\Z e_1e_2 \oplus\Z e_3\oplus\Z e_2^2\oplus\Z e_1e_3 \oplus\Z e_4\right)\cdot L\end{equation}
As we assume that the classes $p_i=0$ for all $i$ except $p_2$ and $p_4$, the total $L$-class is 
$$L=1+s_2 p_2+s_{2, 2}p_2^2+s_4p_4=1+\displaystyle\frac{7}{45}p_2-\displaystyle\frac{19}{14175}p_2^2+\displaystyle\frac{381}{14175}p_4,$$
as each of the $e_i$ classes can be written as a linear combination of $p_2,p_2^2$ and $p_4$. 

Taking $e_2$ for example, we first expand $e^{x_j}+e^{-x_j}-2$ as a power series
$$e^{x_j}+e^{-x_j}-2=x^2+\displaystyle\frac{x^4}{12}+\displaystyle\frac{x^6}{360}+\displaystyle\frac{x^8}{20160}+O(x^9).$$
Then analyze the symmetric polynomial:  
 \begin{small} 
\begin{eqnarray*}e_2&=&\sigma_2(e^{x_1}+e^{-x_1}-2, \ e^{x_2}+e^{-x_2}-2,\  \cdots)\\&=&\displaystyle\sum_{j, k}(e^{x_j}+e^{-x_j}-2)(e^{x_k}+e^{-x_k}-2)
\\&=&\displaystyle\sum_{j, k}\left(x_j^2+\displaystyle\frac{x_j^4}{12}+\displaystyle\frac{x_j^6}{360}+\displaystyle\frac{x_j^8}{20160}+O(x_j^9)\right)\left( x_k^2+\displaystyle\frac{x_k^4}{12}+\displaystyle\frac{x_k^6}{360}+\displaystyle\frac{x_k^8}{20160}+O(x_k^9)\right)
\\&=&
 \displaystyle\sum_{j, k}\left(x_j^2 x_k^2+\displaystyle\frac{x_j^4 x_k^4}{144}+\displaystyle\frac{x_j^2 x_k^6}{360}+\displaystyle\frac{x_j^6 x_k^2 }{360}\right) +\mbox{terms of degree other than 8 and 16}
\\&=& p_2+\cancelto{0}{\displaystyle\frac{p_1^2p_2}{360}}+\displaystyle\frac{p_2^2}{720}-\cancelto{0}{\displaystyle\frac{p_1p_3}{60}}+\displaystyle\frac{p_4}{40}+\mbox{terms of degree other than 8 and 16}.
\end{eqnarray*}
\end{small}

The condition from the summand $\Z e_2\cdot L$ is then
\begin{eqnarray*}\langle\Z e_2\cdot L, \ \mu\rangle&=&\Z\langle(p_2+\displaystyle\frac{p_2^2}{720}+\displaystyle\frac{p_4}{40})\cdot (1+\displaystyle\frac{7}{45}p_2-\displaystyle\frac{19}{14175}p_2^2+\displaystyle\frac{381}{14175}p_4), \ \mu\rangle
\\&=&\Z\langle \displaystyle\frac{113p_2^2}{720}+\displaystyle\frac{p_4}{40},\  \mu \rangle\in\Z[\frac{1}{2}]
\end{eqnarray*}Since we also require that $\langle p_2^2, \ \mu\rangle, \langle p_4, \ \mu\rangle\in \Z$, the condition is equivalent to the congruence relation
$$ 113\langle p_2^2, \ \mu\rangle+18 \langle p_4, \ \mu\rangle\equiv0\pmod{45}$$To find the complete set of congruence relations in dimension 16, one applies the same process to each of the summands in \eqref{3.4.3}.

Alternatively, one may use the approach that will be mentioned in Remark \ref{basis} to express the explicit congruence relations \eqref{3.4.2} in terms of the Pontryagin classes. We will continue using the Hattori-Stong Theorem and the method discussed in the example above in dimension 32 in the following section.\\ 
\end{example}

\subsection{Dimension 32}
\numberwithin{equation}{subsection}
  In this dimension, we ask about the existence of a simply-connected, closed, smooth manifold that is rational homotopy equivalent to a $\Q$-local space $X$ where 
$$H^*(X;\Q)\cong\left\{
                            \begin{array}{ll}
                              \Q  & \ast =0, 16, 32; \\
                               0  & \hbox{ otherwise}.
                            \end{array}
                          \right.$$
                          
Applying the realization theorem \ref{bs}, we look for cohomology classes $p_4$ and $p_8$ in $H^*(X;\Q)$, together with a choice of fundamental class $\mu\in H_{32}(X;\Z)$, such that conditions (i), (ii) and (iii) are satisfied. We can convert the problem to solving a system of diophantine equations. 

\begin{theorem}There exist rational projective planes in dimension 
32. 
\end{theorem}
\begin{proof} The signature condition (i) says
\begin{equation}\label{3.51}s_{4,4}p_4^2+s_8 p_8=\pm1,\end{equation}
where the coefficients can be computed by the formulas \eqref{3.21} and \eqref{3.22} to be 
$$s_{4,4}=-\frac{444721}{162820783125}, \ \ s_8=\frac{118518239}{162820783125}.$$

Similar to the analysis on dimension 24, condition (ii) and the integrality of Pontryagin numbers ensure that we may let 
$$\langle p_4^2,\mu\rangle=\pm x^2, \ \langle p_8,\mu \rangle=\pm y$$ 
where $x$ and $y$ are integers. The signature condition requires the existence of integers $x$ and $y$ such that:
\begin{equation}\label{3.51b}-444721 x^2+118518239  y=\pm 162820783125\end{equation}

To get the congruence relations in condition (iii)
\begin{equation}\label{3.52}
    \langle \Z[e_1, e_2,\cdots ]\cdot L, \ \mu\rangle\in\Z[\frac{1}{2}],
\end{equation}
we expand each basis class of $ \Z[e_1, e_2,\cdots ]$ as a power series in $p_4$ and $p_8$, since we care only about the cohomology classes in dimension 32,  higher degree classes in the representations having been discarded. The $e_i$ classes are calculated as follows:
$$\left\{ \begin{array}{ll}
   e_1=-\frac{1}{5040}p_4+\frac{1}{2615348736000}p_4^2-\frac{1}{1307674368000}p_8\\
   e_2=\frac{1}{40}p_4+\frac{3119}{435891456000}p_4^2+\frac{5461}{217945728000}p_8, \ \ \   e_1e_1=\frac{1}{25401600}p_4^2 \\
   e_3=-\frac{1}{3}p_4 + \frac{19}{39916800}p_4^2-\frac{31}{2851200}p_8, \ \ \    e_1e_2=-\frac{1}{201600}p_4^2, \ \ \ e_1^3=0\\
   e_4=p_4 + \frac{1}{1209600}p_4^2 + \frac{457}{604800}p_8, \ \ \  e_1e_3=\frac{1}{15120}p_4^2, \ \ \   e_2e_2=\frac{1}{1600} p_4^2\\
   e_5=-\frac{43}{2520}p_8,\ \ \ e_1e_4=-\frac{1}{5040}p_4^2,   \ \ \ e_2e_3=-\frac{1}{120}p_4^2, \\
   e_6=\frac{29}{180}p_8, \ \ \  e_2e_4=\frac{1}{40}p_4^2, \ \ \ e_3e_3=\frac{1}{9}p_4^2,\ \ \ e_1e_5=0\\
   e_7=-\frac{2}{3}p_8, \ \ \ e_3e_4=-\frac{1}{3}p_4^2,\ \ \  e_2e_5=0,\ \ \ e_1e_6=0 \\
   e_8=p_8, \ \ \ e_4e_4=p_4^2, \ \ \ e_3e_5=0, \ \ \ e_2e_6=0, \ \ \ e_1e_7=0   
\end{array}
                       \right.
$$

Multiplying the nonzero basis class on $e_i$ with the total $L$ class
$$L=1+L_4+L_8=1+\frac{381}{14175}p_4-\frac{444721}{162820783125} p_4^2 + \frac{118518239}{162820783125} p_8,$$
we obtain a basis for $\Z[e_1, e_2,\cdots ]\cdot L$ consisting of linear combinations of $p_4^2$ and $p_8$ in dimension 32. 
\begin{equation*}\left\{ \begin{array}{ll}
                            1\cdot L= -\frac{444721}{162820783125}p_4^2+\frac{118518239}{162820783125}p_8\\
                            \\
                           e_1\cdot L= -\frac{1992521}{373621248000} p_4^2-\frac{1}{1307674368000}p_8\\
                           \\
                           e_2\cdot L=\frac{292903727}{435891456000} p_4^2 + \frac{5461}{217945728000}p_8,\ \ \ \ \ \ \ e_1e_1\cdot L= \frac{1}{25401600}p_4^2\\
                           \\
                           e_3\cdot L=-\frac{357613}{39916800}p_4^2-\frac{31}{2851200}p_8,\ \ \ \ \ \ \ e_1e_2\cdot L= -\frac{1}{201600}p_4^2\\
                           \\
                           e_4\cdot L=\frac{32513}{1209600}p_4^2 + \frac{457}{604800}p_8, \ \ \ e_1e_3\cdot L=\frac{1}{15120}p_4^2, \ \ \ e_2e_2\cdot L=\frac{1}
                           {1600}p_4^2\\
                           \\
                           e_5\cdot L=-\frac{43}{2520}p_8, \ \ \ e_1e_4\cdot L=-\frac{1}{5040}p_4^2, \ \ \ e_2e_3\cdot L=-\frac{1}{120}p_4^2\\
                           \\
                           e_6\cdot L= \frac{29}{180}p_8,\ \ \  e_2e_4\cdot L=\frac{1}{40}p_4^2, \ \ \ e_3e_3\cdot L=\frac{1}{9}p_4^2\\
                           \\
                           e_7\cdot L=-\frac{2}{3}p_8, \ \ \ \ \ \ e_3e_4\cdot L=-\frac{1}{3}p_4^2\\
                           e_8\cdot L= p_8, \ \ \ \ \ \ e_4e_4\cdot L= p_4^2
                                                      \end{array}
                        \right.
\end{equation*}
Thus the integrality condition \eqref{3.52} holds true if and only if each basis class satisfies the relation
\begin{equation}\label{3.53}\langle -, \ \mu\rangle\in\Z[\frac{1}{2}].
\end{equation}

We have set up integers $x$ and $y$ so that $\langle p_4^2,\mu
\rangle=\pm x^2$ and $\langle p_8,\mu \rangle=\pm y$. As we simplify the coefficients and throw away the redundant relations, \eqref{3.53} is equivalent to the following set of congruence relations on integers $x$ and $y$.
\begin{equation}\label{3.54}\left\{ \begin{array}{ll}
    162820783125\ |\ -444721x^2+118518239 y\\
    638512875\ |\ 13947647x^2+2y\\
    212837625\ |\ 292903727x^2+10922y\\
    155925\ |\ 357613x^2+434y\\
    4725\ |\ 32513x^2 + 914y\\
    99225\ |\ x^2\\
    315\ |\ y.
                            \end{array}
                        \right.
\end{equation}

The last six congruence relations in \eqref{3.54} is equivalent to 
\begin{equation}\label{3.55}\left\{ \begin{array}{ll}
x^2\equiv 0 \pmod{3^4\cdot 5^2\cdot 7^2}, \\
 y \equiv 312282614\, x^2 \pmod{ 638512875}                      
  \end{array}
                        \right.
\end{equation}
Let $A$ and $B$ be integers, we may write 
\begin{equation}\label{3.55b}\left\{ \begin{array}{ll}
x^2=(3^4\cdot 5^2\cdot 7^2)\,A^2, \\
 y =(312282614)(3^4\cdot 5^2\cdot 7^2)\,A^2+(638512875)B                     
  \end{array}
                        \right.
\end{equation}
Plug in to the signature equation \eqref{3.51b}, we have 
\begin{eqnarray*}
-444721(3^45^27^2\,A^2) +118518239 [(312282614)(3^45^27^2\,A^2)+638512875B]\\
=\pm 162820783125,
\end{eqnarray*}
which is simplified to
\begin{align}\label{3.54b}
5751543975315A^2 + 118518239 B&=\pm 255 \nonumber\\
A^2 &\equiv\pm(5751543975315)^{-1}(255) \pmod{118518239 }\nonumber\\
A^2 &\equiv\pm59181964\pmod{118518239}
\end{align}

The Jacobi symbol
$$\left(\dfrac{59181964}{118518239}\right)=1,$$
which is a necessary condition for 19744467 to be a quadratic residue $\pmod {118518239}$. Checking the Jacobi symbol on each of the prime factors of $118518239=7\cdot 31\cdot 151\cdot 3617$, we have:
$$\left(\dfrac{59181964}{7}\right)=\left(\dfrac{59181964}{31}\right)=\left(\dfrac{59181964}{151}\right)=\left(\dfrac{59181964}{3617}\right)=1.$$
This indicates that 59181964 is indeed a quadratic residue $\pmod {118518239}$. 
Therefore \eqref{3.54b} has a solution. So we have shown that the system of Diophantine equations \eqref{3.51b} and \eqref{3.54}, which is equivalent to condition (i), (ii) and (iii), has infinitely many integer solutions. For example, the solution with the smallest positive $x$ value is
$$x=493965360, \ y=915578185531275.$$
\end{proof}

\begin{remark}\label{novikov}Recall that in the rational surgery realization theorem \ref{bs}, as we construct a $\Q$-homotopy equivalence $f: M\rightarrow X$, Pontryagin numbers of the resulting manifold $M$ are realized by the input pairings $\langle p_I, \mu\rangle=\langle p_I(\tau_M), [M]\rangle$. Therefore distinct integer solutions $x$ and $y$ in dimension 32 correspond to distinct pairs of Pontryagin numbers, which are homeomorphism invariants. So we have shown that there are infinitely many homeomorphism types of closed smooth manifolds that are rational analogs of projective planes. This ends the proof of our main theorem \ref{main}.
\end{remark}

\begin{remark}\label{basis} There is another approach to computing the congruence relations among Pontryagin numbers of closed smooth manifolds. The torsion-free part of the oriented cobordism ring is a polynomial ring over $\Z$, generated by a set of closed smooth manifolds in dimension $4k$, $k\geq 2$, 
$$\Omega^{SO}_*/\mbox{tor}\cong \Z[M^4, M^8, \ldots]$$
where the generator $M^{4k}$ can be taken as any manifold satisfying the following characteristic number property by Stong \cite[Page 207]{st1}: 
\begin{equation*}s_k(p_1,\ldots,p_k)[M^{4k}]=\left\{
                            \begin{array}{ll}
                              \pm q  & \hbox{ if $2k+1$ is a power of the prime $q$;} \\
                              \pm 1  & \hbox{ if $2k+1$ is not a prime power}.
                            \end{array}
                          \right.
\end{equation*}
Pontryagin numbers are oriented cobordism invariants. If we can find a set of basis manifold of $\Omega^{SO}_{4k}/$tor and compute the Pontryagin numbers, the congruence relations are then computable from the integer sub-lattice. Since $s_{k}[\CP^{2k}]=2k+1$, in many of the $4k$ dimensions (when $2k+1=q$, with $q$ a prime), $\CP^{2k}$ qualifies as a generator. For example, in dimension 8,
$$\Omega^{SO}_8\cong\langle\CP^2\times\CP^2\rangle\oplus\langle\CP^4\rangle.$$
In particular, for any closed smooth $8$-dimensional manifold $N$, the Pontryagin number of $N$ can be written as a linear combination
\begin{equation*}\left\{ \begin{array}{ll}
    p_{11}[N]=k p_{1,1}[\CP^2\times\CP^2]+\ell p_{1,1}[\CP^4]=18k+25\ell\\
       
    p_2[N]=k p_{2}[\CP^2\times\CP^2]+\ell p_{2}[\CP^4]=9k+10\ell\\
\end{array}
                        \right.
\end{equation*}\\
with $k, \ell\in\Z$. Thus, the congruence relations among Pontryagin numbers of any 8-dimensional closed smooth manifold $N$ can be computed as
\begin{equation*}\left\{ \begin{array}{ll}
    5\ |\ p_{1,1}[N] - 2p_{2}[N] \\
   9\ |\ 2p_{1,1}[N] - 5 p_{2}[N]\\
    p_{1,1}[N]\in\Z\\
     p_{2}[N]\in\Z.
\end{array}
                        \right.
\end{equation*}

However, in dimensions such as $4k=16$ and $4k=28$ where $2k+1$ is not a prime, $\CP^{2k}$ does not satisfy the characteristic number property, thus fails to qualify as a generator. We have to construct a generating manifold from a disjoint union of $\CP^{2k}$ and certain complex hypersurfaces (see Milnor \cite[Page 250]{m}). For example, in dimension $4k=16$, we have 
$$s_4(p)[9\CP^8+\mathcal{H}_{3,6}]=-3$$
and in dimension $4k=28$
$$s_7(p)[-85\CP^{14}-16\mathcal{H}_{3,12}+2\mathcal{H}_{5,10}]=-1$$
where $\mathcal{H}_{m,n}$ is the hypersurface of degree $(1,1)$ in $\CP^m\times\CP^n$. Once we obtain the generating manifolds, we still need to compute all the Pontryagin numbers $p_I$ for a set of basis manifolds, which is  very tedious.

\end{remark}

\end{document}